\documentclass[12pt]{amsart}

\usepackage[cal=cm,scr=euler]{mathalfa}
\usepackage{microtype}
\usepackage{mlmodern}

\usepackage[T1]{fontenc}
\usepackage[a4paper,margin=2.5cm,top=2.5cm,bottom=2.5cm,centering,headheight=3ex,headsep=4ex,vcentering]{geometry}
\usepackage{amsmath,amssymb,amsthm,mathtools}
\usepackage{xcolor}
\usepackage[colorlinks=true,linkcolor=blue,citecolor=blue,urlcolor=blue]{hyperref}

\newtheorem{theorem}{Theorem}[section]

\newtheorem{lemma}[theorem]{Lemma}
\newtheorem{corollary}[theorem]{Corollary}
\newtheorem{conjecture}[theorem]{Conjecture}
\theoremstyle{remark}
\newtheorem{remark}[theorem]{Remark}

\newcommand{\CT}{\operatorname{CT}}
\newcommand{\Z}{\mathbb{Z}}
\newcommand{\cphi}{c\phi}
\newcommand{\CPhi}{\mathrm{C}\Phi}

\title{Generalized Frobenius Partitions Modulo Powers of $2$}
\author{Manjil P. Saikia}
\address{Mathematical and Physical Sciences division, School of Arts and Sciences, Ahmedabad University, Navrangpura, Ahmedabad 380009, Gujarat, India}
\email{manjil.saikia@ahduni.edu.in}

\keywords{Integer partitions; generalized Frobenius partitions; Ramanujan-type congruences}

\subjclass[2020]{11P81, 11P83, 05A17.}

\begin{document}

\begin{abstract}
Let $c\phi_k(n)$ denote the number of $k$-colored generalized Frobenius partitions of $n$.  We prove that, for every $m\geq2$ and every $k\equiv2\pmod{2^m}$,
\[
 \sum_{n\geq0}c\phi_k(n)q^n\equiv\frac{\varphi(q)\,(q^2;q^2)_\infty}{(q;q)_\infty^2}\sum_{n\geq0}c\phi_{k/2}(n)q^{2n}\pmod{2^m},
\]
where $\varphi(q)$ is the classical theta function.  For $m=2$ this recovers a congruence of Chan, Wang, and Yang. Applied with $k=18$, we determine $c\phi_{18}(2n+1)$ modulo $16$ completely. In particular, we also prove
\[
 \sum_{n\geq0}c\phi_{18}(6n+1)q^n
 \equiv4\sum_{r\in\mathbb{Z}}q^{r(3r-1)/2}\pmod{16},
\]
which proves the congruences $c\phi_{18}(30n+19)\equiv c\phi_{18}(30n+25)\equiv0\pmod{16}$ recently conjectured by Das, Nath, and Sarma (2026). It also yields further congruences modulo $16$ and a simple modulo-$8$
characterization that recovers and extends a recent congruence of those authors.
As a second application we set $k=10$ and determine $c\phi_{10}(2n+1)$ modulo $8$.
\end{abstract}

\maketitle

\section{Introduction}

A generalized Frobenius partition of $n$ is a two-rowed array
\[
 \begin{pmatrix}
  a_1 & a_2 & \cdots & a_r\\
  b_1 & b_2 & \cdots & b_r
 \end{pmatrix}
\]
of nonnegative integers with $n=r+\sum_{i}a_i+\sum_{i}b_i$, where
the entries have a prescribed ordering condition.  When both
rows are strictly decreasing, these arrays are the classical
Frobenius symbols and are in bijection with ordinary partitions of
$n$.  In his 1984 Memoir \cite{Andrews1984}, Andrews studied several
other ordering conditions, the most influential of which is the
following: the entries are taken from $k$ copies of the nonnegative
integers, distinguished by $k$ colors, and each row consists of
distinct colored integers, strictly decreasing in a fixed total order
on the colored integers.  The resulting objects are the $k$-colored
generalized Frobenius partitions.  We write $\cphi_k(n)$ for their
number and
\[
 \CPhi_k(q):=\sum_{n=0}^{\infty}\cphi_k(n)q^n
\]
for the generating function. Thus $\cphi_1(n)=p(n)$ is the ordinary
partition function.  Andrews showed that $\CPhi_k(q)$ is the constant
term in $z$ of an explicit infinite product (recalled in
\eqref{eq:Andrews-CT} below) and used this, among other things, to
prove the Ramanujan-type congruence $\cphi_2(5n+3)\equiv0\pmod5$.

The arithmetic of $\cphi_k(n)$ has since been studied intensively, so we mention only some highlights.
Kolitsch \cite{Kolitsch1989,Kolitsch1991} introduced the companion
function $\overline{\cphi}_k(n)$,
proved $\overline{\cphi}_k(n)\equiv0\pmod{k^2}$, and related
$\overline{\cphi}_k$ to ordinary partitions for $k=5,7,11$; Sellers
\cite{Sellers1993,Sellers1994} sharpened Kolitsch's congruence on the
progressions $kn$ for $k=2,3,5,7,11$.  The Andrews--Sellers conjecture on
$\cphi_2$ modulo powers of $5$ \cite{Andrews1984,Sellers1994b} was proved
by Paule and Radu \cite{PauleRadu2012} using modular functions. Baruah and Sarmah \cite{BaruahSarmah2011,BaruahSarmah2015} studied
$\cphi_4$ and $\cphi_6$ and posed conjectures on $\cphi_6$ that were
settled by Xia \cite{Xia2015}. Hirschhorn \cite{Hirschhorn2016} subsequently gave further proofs of
congruences for $\cphi_6$, and Cui and Gu \cite{CuiGu2019} established congruences modulo
powers of $2$ for $\cphi_6$.

Garvan and Sellers
\cite{GarvanSellers2014} observed that the congruence
$(1+X)^p\equiv1+X^p\pmod p$, applied factor by factor inside Andrews'
constant-term representation, transfers any congruence
$\cphi_k(pn+r)\equiv0\pmod p$ to $\cphi_{pN+k}(pn+r)\equiv0\pmod p$
for all $N\geq0$.  A unifying framework was provided
by Chan, Wang, and Yang \cite{ChanWangYang2019}, who showed that
$\CPhi_k(q)$ is (up to a power of $q$ and an eta-quotient) a modular
form and expressed it explicitly as a linear combination of
eta-quotients for many $k$.  Using Kolitsch's congruence they also proved
\cite[Theorem~5.3]{ChanWangYang2019} that, for $p$ prime, $\alpha\geq1$ and $p\nmid N$,
\begin{equation}
 \cphi_{p^\alpha N}(n)\equiv\cphi_{p^{\alpha-1}N}(n/p)\pmod{p^{2\alpha}},
 \label{eq:CWY53}
\end{equation}
where $\cphi_k(x)=0$ for $x\notin\Z$.  Recently Cui, Gu, and Tang
\cite{CuiGuTang2024,CuiGuTang2025} developed the ``method of constant
terms'' and integer matrix exact covering systems to obtain expressions for $\CPhi_k(q)$
with integral coefficients, together with several infinite families of
congruences.

For $k$ even, \eqref{eq:CWY53} with $p=2$ reads
$\cphi_{k}(n)\equiv\cphi_{k/2}(n/2)\pmod{2^{2v_2(k)}}$.  When
$k\equiv2\pmod4$ this gives information modulo $4$ only, namely
$\cphi_k(2n+1)\equiv0$ and $\cphi_k(2n)\equiv\cphi_{k/2}(n)\pmod4$; see
\cite[(6.7), (6.22), (6.33)]{ChanWangYang2019} for $k=6,10,14$.  The
first purpose of this paper is to show that an elementary refinement of
the Garvan--Sellers idea goes beyond modulus $4$ for such $k$.  

To state our first result, we need the following standard notations
\[
 f_r:=(q^r;q^r)_\infty:=\prod_{i\geq 0}(1-q^{r+ri})\qquad \text{and} \qquad 
 \varphi(q):=\sum_{n\in\Z}q^{n^2}.
\]

\begin{theorem}\label{thm:general}
Let $m\geq2$ and let $k\equiv2\pmod{2^m}$.  Then
\begin{equation}
 \CPhi_k(q)\equiv\frac{\varphi(q)f_2}{f_1^2}\,\CPhi_{k/2}(q^2)\pmod{2^m}.
 \label{eq:general}
\end{equation}
Consequently,
\begin{equation}
 \sum_{n\geq0}\cphi_k(2n+1)q^n\equiv4\frac{f_4^4}{f_1^4}\CPhi_{k/2}(q)
 \quad\text{and}\quad
 \sum_{n\geq0}\cphi_k(2n)q^n\equiv\frac{f_2^{12}}{f_1^8f_4^4}\CPhi_{k/2}(q)
 \pmod{2^m}.
 \label{eq:general-parts}
\end{equation}
\end{theorem}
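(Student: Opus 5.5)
The plan is to work directly with Andrews' constant-term representation \eqref{eq:Andrews-CT}. Write
\[
P(z,q):=(-zq;q)_\infty(-z^{-1};q)_\infty ,\qquad\text{so that}\qquad \CPhi_k(q)=\CT_z\,P(z,q)^k .
\]
As a sanity check on normalization, the case $k=1$ gives $1/f_1$ by the Jacobi triple product. Put $k=2+2^mN$, so that $k/2=1+2^{m-1}N$.

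\textbf{Step 1 (the refinement of the Garvan--Sellers idea).} For any $X$ with integer coefficients we have $(1+X)^2=1+X^2+2X$. Raising to the $2^{m-1}$ power and using the standard fact that $A\equiv B\pmod 2$ implies $A^{2^{m-1}}\equiv B^{2^{m-1}}\pmod{2^m}$ gives
\[
(1+X)^{2^m}\equiv(1+X^2)^{2^{m-1}}\pmod{2^m}.
\]
Applying this factor by factor yields $P(z,q)^{2^mN}\equiv P(z^2,q^2)^{2^{m-1}N}\pmod{2^m}$. Each coefficient of $z^j$ is a power series in $q$ whose coefficients are finite integer sums, so the congruence is legitimate coefficientwise and survives taking $\CT_z$. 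Hence
\[
\CPhi_k(q)\equiv \CT_z\bigl[P(z,q)^2\,P(z^2,q^2)^{k/2-1}\bigr]\pmod{2^m}.
\]

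\textbf{Step 2 (isolating even powers of $z$).} Since the second factor involves only even powers of $z$, only the even-$z$ part of $P(z,q)^2$ contributes. By Jacobi's triple product, $P(z,q)=f_1^{-1}\sum_j z^jq^{j(j+1)/2}$. Squaring and writing the exponents as $i=s+u$, $j=s-u$ shows that the coefficient of $z^{2s}$ in $f_1^2P(z,q)^2$ is
\[
\sum_u q^{s^2+s+u^2}=q^{s^2+s}\varphi(q).
\]
Therefore the even part of $P(z,q)^2$ equals
\[
f_1^{-2}\varphi(q)\sum_s z^{2s}q^{2\cdot s(s+1)/2}=\frac{\varphi(q)f_2}{f_1^2}\,P(z^2,q^2),
\]
where the last equality is the triple product again with $q\to q^2$ and $z\to z^2$. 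Substituting $w=z^2$ gives
\[
\CPhi_k(q)\equiv\frac{\varphi(q)f_2}{f_1^2}\,\CT_w\bigl[P(w,q^2)^{k/2}\bigr]=\frac{\varphi(q)f_2}{f_1^2}\,\CPhi_{k/2}(q^2)\pmod{2^m},
\]
which is \eqref{eq:general}.

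\textbf{Step 3 (the 2-dissection).} Use $\varphi(q)=f_2^5/(f_1^2f_4^2)$ to rewrite the multiplier as $f_2^6/(f_1^4f_4^2)$. Then insert the known 2-dissection
\[
\frac1{f_1^4}=\frac{f_4^{14}}{f_2^{14}f_8^4}+4q\frac{f_4^2f_8^4}{f_2^{10}} .
\]
The odd part becomes $4q\,f_8^4/f_2^4$ and the even part becomes $f_4^{12}/(f_2^8f_8^4)$. Since $\CPhi_{k/2}(q^2)$ is even in $q$, extracting the odd and even parts and replacing $q^2\to q$ gives the two congruences in \eqref{eq:general-parts}.

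The only delicate point is Step 1: the congruence $(1+X)^{2^m}\equiv(1+X^2)^{2^{m-1}}$ must be applied to infinite products inside a constant term. I would handle this by truncating in $q$, so that only finitely many factors matter for any given power of $q$.
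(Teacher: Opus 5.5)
Your proposal is correct and follows essentially the same route as the paper: the factorwise congruence $(1+X)^{2^m}\equiv(1+X^2)^{2^{m-1}}\pmod{2^m}$ inside Andrews' constant-term formula, then the observation that only the even-in-$z$ part $\varphi(q)\Theta(z^2,q^2)$ of $\Theta(z,q)^2$ contributes, then the $2$-dissection of $1/f_1^4$. The differences are cosmetic: you derive the binomial congruence from the lifting fact $A\equiv B\pmod 2\Rightarrow A^{2^{m-1}}\equiv B^{2^{m-1}}\pmod{2^m}$ rather than from $2$-adic valuations of binomial coefficients, and you write the even part of $P(z,q)^2$ directly as $\frac{\varphi(q)f_2}{f_1^2}P(z^2,q^2)$ instead of keeping the $\Theta$'s and dividing by $f_1^2f_2^{N}$ at the end.
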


Since $\varphi(q)f_2/f_1^2\equiv1\pmod4$ (see Lemma~\ref{lem:binomial}),
the case $m=2$ of \eqref{eq:general} is exactly \eqref{eq:CWY53} with
$(p,\alpha)=(2,1)$; the cases $m\geq3$ appear to be new.

Our main application is to $k=18$, where Theorem~\ref{thm:general}
applies with $m=4$. In fact, the results we now describe were the motivation and the starting point of this paper.  Das, Nath, and Sarma
\cite{DasNathSarma2026} recently proved the congruence $\cphi_{18}(3n+2)\equiv 0 \pmod{3^7}$
stated by Cui, Gu, and Tang \cite{CuiGuTang2025} as a conjecture, and, in the same
paper they \cite{DasNathSarma2026}, proposed the following.

\begin{conjecture}[{\cite[Conjecture~5.1]{DasNathSarma2026}}]
\label{conj:DNS}
For every $n\geq0$,
\begin{equation}
 \cphi_{18}(30n+19)\equiv0\pmod{16},\qquad
 \cphi_{18}(30n+25)\equiv0\pmod{16}.
 \label{eq:conjecture}
\end{equation}
\end{conjecture}

We prove Conjecture~\ref{conj:DNS} by determining $\cphi_{18}(2n+1)$
modulo $16$ completely. We need the following standard notation
\[
 \psi(q):=\sum_{n\geq0}q^{n(n+1)/2}.
\]

\begin{theorem}\label{thm:odd18}
We have
\begin{equation}
 \sum_{n=0}^{\infty}\cphi_{18}(2n+1)q^n
 \equiv4\psi(q)+8q^4\psi(q)\frac{f_9^{12}}{f_3^4}\pmod{16}.
 \label{eq:odd18}
\end{equation}
\end{theorem}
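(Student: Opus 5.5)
By Theorem~\ref{thm:general} with $k=18$ and $m=4$, the first congruence in \eqref{eq:general-parts} gives
\[
 \sum_{n\geq0}\cphi_{18}(2n+1)q^n\equiv 4\frac{f_4^4}{f_1^4}\CPhi_9(q)\pmod{16}.
\]
Because of the factor $4$, it suffices to determine $\frac{f_4^4}{f_1^4}\CPhi_9(q)$ modulo $4$. Modulo $4$ we have $f_1^4\equiv f_2^2$ and $f_4^4\equiv f_8^2\equiv f_4^4$, and so on. So the first step is to simplify $f_4^4/f_1^4$ modulo $4$ to a lacunary form. Using $f_1^4\equiv f_2^2 \pmod 4$ gives $f_4^4/f_1^4\equiv f_4^4/f_2^2\pmod4$. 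Since $\psi(q)=f_2^2/f_1$, we get $f_4^4/f_2^2=\psi(q^2)^2$. Hence the target reduces to
\[
 \psi(q^2)^2\,\CPhi_9(q)\equiv \psi(q)+2q^4\psi(q)\frac{f_9^{12}}{f_3^4}\pmod 4.
\]

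The second step is to get $\CPhi_9(q)$ modulo $4$. I would use Andrews' constant-term representation \eqref{eq:Andrews-CT}: $\CPhi_9(q)$ is the constant term of $\prod_n(1+zq^n)^9(1+z^{-1}q^{n+1})^9$ over $f_1^9$ (up to normalization). Modulo $2$, $(1+X)^9\equiv(1+X)(1+X^8)$, but we need modulo $4$. I would instead use the Garvan--Sellers style refinement: $(1+X)^8\equiv(1+X^2)^4\equiv(1+X^4)^2\pmod 4$ (valid since squaring a congruence mod $2$ lifts to mod $4$). This gives
\[
 \CPhi_9\equiv \frac{1}{f_1^9}\CT_z\Big[\prod(1+zq^n)(1+z^{-1}q^{n+1})\prod(1+z^4q^{4n})^2(1+z^{-4}q^{4n+4})^2\Big].
\]
Expanding each product by Jacobi triple product, the first factor is $\sum_a z^aq^{a(a+1)/2}/f_1$. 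The second is a square of $\sum_b z^{4b}q^{2b(b+1)}/f_4$, that is, $\sum_{b,c}z^{4(b+c)}q^{2b(b+1)+2c(c+1)}/f_4^2$. Taking the constant term forces $a=-4(b+c)$, which gives a binary quadratic-form theta series divided by eta products.

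Alternatively, and probably more cleanly, I would quote an explicit eta-quotient expression for $\CPhi_9$ from Chan--Wang--Yang or Cui--Gu--Tang and reduce it modulo $4$. The $f_9^{12}/f_3^4$ term and the appearance of $\psi(q)$ suggest that modulo $4$ one gets $\CPhi_9(q)\equiv \frac{f_2^2}{f_1}\cdot\frac{1}{\psi(q^2)^2}(1+2q^4 f_9^{12}/f_3^4)$, i.e.\ something like $\CPhi_9\equiv \frac{f_1}{f_4^4}f_2^2\cdots$. I would aim to verify directly that
\[
 \CPhi_9(q)\equiv \frac{f_2^6}{f_1f_4^4}\Big(1+2q^4\frac{f_9^{12}}{f_3^4}\Big)\pmod 4,
\]
using the fact that modulo $2$ only the "$2q^4\cdots$" correction matters at the second $2$-adic digit.

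The main obstacle is this last identification modulo $4$. The mod-$2$ part follows from $\CPhi_9\equiv \CPhi_1\cdot(\text{something in }q^8)$-type Garvan--Sellers reductions. The order-$2$ correction requires genuine identities. To handle it, I would separate $\CPhi_9=A+2B$ and match $B$ modulo $2$ with $q^4f_9^{12}/f_3^4\cdot f_2^6/(f_1f_4^4)\equiv q^4 f_9^{12}f_1 f_3^{-4}\cdots$, using $3$-dissections of $f_1$ and $\psi$, or cubic theta-function identities. If the hand manipulation stalls, I would use Sturm's bound: both sides times a suitable eta-quotient are modular forms on $\Gamma_0(72)$ or $\Gamma_0(144)$, and the congruence modulo $4$ can then be checked on finitely many coefficients.
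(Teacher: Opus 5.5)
Your opening reduction is correct and is the same as the paper's. By \eqref{eq:general-parts} with $k=18$, $m=4$, it suffices to know $H(q)=\frac{f_4^4}{f_1^4}\CPhi_9(q)$ modulo $4$, and $f_4^4/f_1^4\equiv\psi(q^2)^2\pmod 4$ is fine. The gap is that you never determine $\CPhi_9(q)$ modulo $4$, which is the whole content of the theorem. You label it ``the main obstacle'' and list possible tools ($3$-dissections, a Sturm-bound check) without carrying any of them out.

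Even quoting the Chan--Wang--Yang formula does not finish by itself. Reduced modulo $4$ it gives $f_1^9\CPhi_9\equiv(f_1^{12}+2q^2f_1^6f_9^6-q^4f_9^{12})/f_3^4$. You still need the cubic dissection $f_1^3=a(q^3)f_3-3qf_9^3$ to collapse the numerator to $a(q^3)^4f_3^4+2q^4f_9^{12}$. You then need $a(q)\equiv1\pmod 2$, so that $a(q^3)^4\equiv1\pmod4$, and $f_4^4/f_1^{13}\equiv f_1^3\equiv\psi(q)\pmod 4$. That is exactly the paper's argument.

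The congruence you display as the thing to verify is also false. Simplifying your own (correct) expression $\frac{f_2^2}{f_1}\cdot\frac{1}{\psi(q^2)^2}(\cdots)$ gives $\frac{f_2^4}{f_1f_4^4}(\cdots)\equiv\frac{1}{f_1f_4^2}\bigl(1+2q^4f_9^{12}/f_3^4\bigr)\pmod4$. Your $\frac{f_2^6}{f_1f_4^4}(\cdots)$ differs from this by the factor $f_2^2\not\equiv1\pmod 4$. Separately, the prefactor $1/f_1^9$ in your constant-term display is spurious: $\CPhi_9$ is the constant term of the product itself, and the only denominator is the $1/(f_1f_4^2)$ coming from the triple product.

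That said, your constant-term route, with the correct reduction $J(z,q)^8\equiv J(z^4,q^4)^2\pmod4$, does lead to a complete proof that bypasses the exact formula for $\CPhi_9$, if you push it through. Put $u=b+c$, $v=b-c$. The constant term of $\Theta(z,q)\Theta(z^4,q^4)^2$ is then $\sum q^{9u^2+v^2}$ over $u\equiv v\pmod 2$, which equals $\varphi(q^4)\varphi(q^{36})+4q^{10}\psi(q^8)\psi(q^{72})$. Hence $\CPhi_9\equiv\varphi(q^4)\varphi(q^{36})/(f_1f_4^2)\pmod 4$, and so $H\equiv\psi(q)\varphi(q^4)\varphi(q^{36})\pmod4$.

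The remaining input is genuinely arithmetic: $\sum_{n\ge1}\bigl(q^{n^2}+q^{9n^2}\bigr)\equiv qf_9^3/f_3\pmod2$. This follows, for example, by comparing $a(q)=a(q^3)+6qf_9^3/f_3$ with $a(q)=\varphi(q)\varphi(q^3)+4q\psi(q^2)\psi(q^6)$ modulo $4$, or from the divisor formula for $3$-cores. Compared with the paper, this route needs only two classical identities for $a(q)$ instead of the five-term eta-quotient formula \eqref{eq:C9-exact}. As submitted, however, neither route is executed, so the proposal is a plan rather than a proof.
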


Extracting the terms $q^{3n}$ from \eqref{eq:odd18} gives the following.

\begin{theorem}\label{thm:strong}
We have
\begin{equation}
 \sum_{n=0}^{\infty}\cphi_{18}(6n+1)q^n
 \equiv4\sum_{r\in\Z}q^{r(3r-1)/2}\pmod{16}.
 \label{eq:strong}
\end{equation}
\end{theorem}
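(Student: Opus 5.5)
The plan is to start from Theorem~\ref{thm:odd18} and extract the terms $q^{3n}$ from each of the two summands on the right of \eqref{eq:odd18} separately.

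\textbf{The second summand.} It is $8q^4\psi(q)f_9^{12}/f_3^4$. The factor $f_9^{12}/f_3^4$ is a series in $q^3$, so the 3-dissection only involves $q^4\psi(q)$. For this I would use the classical 3-dissection
\[
\psi(q)=A(q^3)+q\,\psi(q^9),\qquad A(q)=\frac{f_2f_3^2}{f_1f_6}.
\]
The exponents $q^4\cdot q^{3j}$ are $\equiv1\pmod 3$ and $q^4\cdot q\cdot q^{9j}$ are $\equiv2\pmod3$. Hence $q^4\psi(q)$ has no terms with exponent $\equiv0\pmod3$, and this summand contributes nothing to the coefficients of $q^{3n}$.

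A cleaner way to see the same thing is directly from the exponents: $q^4\psi(q)$ has exponents $4+j(j+1)/2$. Since $j(j+1)/2\bmod 3\in\{0,1\}$, we get $4+j(j+1)/2\equiv 1$ or $2\pmod 3$, never $0$. So no dissection identity is even needed here.

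\textbf{The first summand.} It remains to extract the $q^{3n}$ terms of $4\psi(q)$, i.e.\ the triangular numbers $j(j+1)/2$ divisible by $3$.
\begin{itemize}
\item We have $j(j+1)/2\equiv0\pmod3$ exactly when $j\equiv0$ or $2\pmod3$.
\item Put $j=3r$ with $r\ge0$, or $j=3r-1$ with $r\ge1$; equivalently, $j=-3r-1$ handles the negative indices.
\item A uniform parametrization: for $r\in\Z$, let $j=3r-1$ when $r\geq1$ and $j=-3r$ when $r\le0$.
\end{itemize}
In both cases $j(j+1)/2=3\cdot r(3r-1)/2$. For $j=3r-1$: $(3r-1)(3r)/2=3r(3r-1)/2$. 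For $j=-3r$ with $r\le0$: $(-3r)(-3r+1)/2=3r(3r-1)/2$. This map is a bijection from $\Z$ onto $\{j\ge0: j\not\equiv1\pmod3\}$. Therefore the $q^{3n}$-part of $\psi(q)$, after $q^3\to q$, is $\sum_{r\in\Z}q^{r(3r-1)/2}$.

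\textbf{Conclusion.} Multiplying by $4$ and combining with the vanishing of the second summand gives \eqref{eq:strong}.

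The only step needing care is the bijection bookkeeping for the triangular exponents. There is no real obstacle once Theorem~\ref{thm:odd18} is available, since the entire difficulty lies there.
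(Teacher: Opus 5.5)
Your proof is correct. It has the same skeleton as the paper's: you extract the $q^{3n}$ terms from \eqref{eq:odd18}, and you show the summand $8q^4\psi(q)f_9^{12}/f_3^4$ contributes nothing because $f_9^{12}/f_3^4$ is a series in $q^3$ and triangular numbers are never $\equiv 2\pmod 3$. You then identify the $q^{3n}$-part of $\psi(q)$ with the pentagonal theta series. The difference lies in this last identification. The paper reads off $U_3\psi(q)=f_2f_3^2/(f_1f_6)$ from the product form of the $3$-dissection \eqref{eq:psi-3dissection}. It then converts this product back into $\sum_{r\in\Z}q^{r(3r-1)/2}$ using $f_2/f_1=(-q;q)_\infty$, $f_6/f_3=(-q^3;q^3)_\infty$ and Jacobi's triple product in base $q^3$. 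You instead stay at the level of exponents. You note that $3\mid j(j+1)/2$ exactly when $j\not\equiv1\pmod 3$. You then observe that $r\mapsto 3r-1$ for $r\ge1$ and $r\mapsto -3r$ for $r\le0$ is a bijection $\Z\to\{j\ge0: j\not\equiv1\pmod3\}$ with $j(j+1)/2=3\cdot r(3r-1)/2$. Your route is fully elementary and self-contained: it needs neither the cited dissection identity nor the triple product, and it makes the vanishing of the second summand immediate. The paper's route is the natural one in a $q$-product setting, and it records the intermediate congruence \eqref{eq:C18-6n1-product}, which expresses the result as an eta-quotient. One small point: the aside ``$j=-3r-1$ handles the negative indices'' in your second bullet is muddled. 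It is harmless, because the uniform parametrization you give immediately afterwards is correct.
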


Theorem~\ref{thm:strong} immediately implies \eqref{eq:conjecture},
but it says considerably more.  Since the generalized pentagonal
numbers $r(3r-1)/2$, $r\in\Z$, are pairwise distinct, the coefficients
of the right side of \eqref{eq:strong} are all $0$ or $4$, so
\eqref{eq:strong} determines $\cphi_{18}(6n+1)$ modulo $16$
completely.  Furthermore, as $24\cdot\frac{r(3r-1)}{2}+1=(6r-1)^2$ is
always a perfect square, we obtain the following.

\begin{corollary}\label{cor:sharp}
For every $n\geq0$,
\[
 \cphi_{18}(6n+1)\equiv
 \begin{cases}
  4\pmod{16},& \text{if } 24n+1 \text{ is a perfect square},\\
  0\pmod{16},& \text{otherwise}.
 \end{cases}
\]
In particular, if $p\geq5$ is prime and $24n+1$ is a quadratic
non-residue modulo $p$, then $\cphi_{18}(6n+1)\equiv0\pmod{16}$.
Consequently, for every $n\geq0$,
\begin{align}
 \cphi_{18}(30n+19)&\equiv\cphi_{18}(30n+25)\equiv0\pmod{16},
 \label{eq:cor-p5}\\
 \cphi_{18}(42n+19)&\equiv\cphi_{18}(42n+25)\equiv\cphi_{18}(42n+37)
 \equiv0\pmod{16}.
 \label{eq:cor-p7}
\end{align}
\end{corollary}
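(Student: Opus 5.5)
The corollary follows from Theorem~\ref{thm:strong}; we only need to read off coefficients and then do some elementary number theory.

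\emph{Step 1: the exact description modulo $16$.} Theorem~\ref{thm:strong} expresses the generating function as $4\sum_{r\in\Z}q^{r(3r-1)/2}$. The generalized pentagonal numbers $r(3r-1)/2$, $r\in\Z$, are pairwise distinct. Hence the coefficient of $q^n$ on the right is $4$ if $n=r(3r-1)/2$ for some $r\in\Z$, and $0$ otherwise.

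We claim $n$ is a generalized pentagonal number exactly when $24n+1$ is a perfect square.
\begin{itemize}
\item If $n=r(3r-1)/2$, then $24n+1=36r^2-12r+1=(6r-1)^2$.
\item Conversely, suppose $24n+1=s^2$. Then $s$ is prime to $6$, so $s\equiv\pm1\pmod 6$. Replacing $s$ by $-s$ if needed, we may take $s\equiv-1\pmod 6$ and write $s=6r-1$ with $r\in\Z$. Then $24n=(6r-1)^2-1=36r^2-12r$, so $n=r(3r-1)/2$.
\end{itemize}
This gives the displayed case distinction.

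\emph{Step 2: the quadratic residue criterion.} Let $p\geq5$ be prime. If $24n+1=s^2$, then $24n+1$ is a square modulo $p$, so it is either $0$ or a quadratic residue modulo $p$. Taking the contrapositive, if $24n+1$ is a quadratic non-residue modulo $p$, then it is not a perfect square. Step 1 then gives $\cphi_{18}(6n+1)\equiv0\pmod{16}$.

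\emph{Step 3: the progressions.} Write each argument as $6n'+1$ and identify the non-residue classes of $24n'+1$.

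For $p=5$:
\begin{itemize}
\item $30n+19=6(5n+3)+1$, with $24(5n+3)+1=120n+73\equiv3\pmod5$.
\item $30n+25=6(5n+4)+1$, with $24(5n+4)+1\equiv97\equiv2\pmod5$.
\end{itemize}
Both $2$ and $3$ are non-residues modulo $5$, since the nonzero squares modulo $5$ are $1$ and $4$.

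For $p=7$:
\begin{itemize}
\item $42n+19=6(7n+3)+1$, with $24n'+1=73\equiv3\pmod7$.
\item $42n+25=6(7n+4)+1$, with $24n'+1=97\equiv6\pmod7$.
\item $42n+37=6(7n+6)+1$, with $24n'+1=145\equiv5\pmod7$.
\end{itemize}
The nonzero squares modulo $7$ are $1,2,4$, so $3,5,6$ are non-residues.

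Step 2 now yields \eqref{eq:cor-p5} and \eqref{eq:cor-p7}.

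All the substance is in Theorem~\ref{thm:strong}. The only place needing care is the converse in Step 1, namely choosing the sign of $s$ so that $s=6r-1$ with $r\in\Z$.
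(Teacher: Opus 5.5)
Your proof is correct and follows the paper's argument essentially step for step. You read off the coefficients of $4\sum_{r\in\Z}q^{r(3r-1)/2}$ via injectivity of $r\mapsto r(3r-1)/2$, characterize generalized pentagonal $n$ by $24n+1=(6r-1)^2$ (your sign choice $s\equiv-1\pmod 6$ is the paper's remark that any square root of $24n+1$ is $\equiv\pm1\pmod 6$), and then check that the stated progressions put $24n+1$ in non-residue classes modulo $5$ and $7$, which is all the stated congruences need.
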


The congruences \eqref{eq:cor-p5} are exactly \eqref{eq:conjecture}. The modulus $16$ in Theorem~\ref{thm:strong} is best possible on the
progression $6n+1$, since
\[
 \cphi_{18}(19)=1201156217053086096\equiv16\pmod{32},
\]
whereas the coefficient of $q^3$ in
$4\sum_{r\in\Z}q^{r(3r-1)/2}$ is $0$.
Reading \eqref{eq:odd18}
modulo $8$ instead gives a second family of congruences.

\begin{corollary}\label{cor:mod8}
For every $n\geq0$,
\[
 \cphi_{18}(2n+1)\equiv
 \begin{cases}
  4\pmod{8},& \text{if } 8n+1 \text{ is a perfect square},\\
  0\pmod{8},& \text{otherwise}.
 \end{cases}
\]
In particular, for every $n\geq0$,
\begin{equation}
 \cphi_{18}(6n+5)\equiv\cphi_{18}(10n+5)\equiv\cphi_{18}(10n+9)\equiv0\pmod8.
 \label{eq:cor-mod8}
\end{equation}
\end{corollary}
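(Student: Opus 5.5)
We reduce \eqref{eq:odd18} modulo $8$ and then read off the coefficients. The term $8q^4\psi(q)f_9^{12}/f_3^4$ vanishes modulo $8$. Theorem~\ref{thm:odd18} therefore gives
\[
 \sum_{n\geq0}\cphi_{18}(2n+1)q^n\equiv4\psi(q)=4\sum_{j\geq0}q^{j(j+1)/2}\pmod8 .
\]
The triangular numbers $j(j+1)/2$, $j\geq0$, are pairwise distinct, so the right side has coefficient $4$ at each triangular number and $0$ elsewhere. Moreover $n=j(j+1)/2$ for some $j\geq0$ if and only if $8n+1=(2j+1)^2$. Conversely, if $8n+1=s^2$, then $s$ is odd, and writing $s=2j+1$ with $j\geq0$ gives $n=j(j+1)/2$. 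This proves the displayed characterization.

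For \eqref{eq:cor-mod8} we show that $8n'+1$ is never a square for the relevant indices $n'$.

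\emph{First congruence.} We have $6n+5=2(3n+2)+1$, so $n'=3n+2$ and $8n'+1=24n+17\equiv2\pmod3$. This is a non-residue modulo $3$, so it is not a square.

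\emph{Second congruence.} We have $10n+5=2(5n+2)+1$, giving $8n'+1=40n+17\equiv2\pmod5$.

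\emph{Third congruence.} We have $10n+9=2(5n+4)+1$, giving $8n'+1=40n+33\equiv3\pmod5$.

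The squares modulo $5$ are $0,1,4$, so in the last two cases $8n'+1$ is not a square. Hence the coefficient is $0$ modulo $8$ in all three cases.

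The whole argument is routine once Theorem~\ref{thm:odd18} is available. The only point needing care is the bijection between triangular numbers and odd squares $8n+1$, including that distinct $j\geq0$ give distinct exponents, so that no coefficient $4$ accumulates to $8$.
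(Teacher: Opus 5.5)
Your proposal is correct and matches the paper's proof essentially step for step. You reduce \eqref{eq:odd18} modulo $8$ to get $4\psi(q)$, identify the triangular exponents with $8n+1$ being a perfect square, and exclude the three progressions using non-residues modulo $3$ and $5$.
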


The congruence $\cphi_{18}(6n+5)\equiv0\pmod8$ in
\eqref{eq:cor-mod8} was proved independently by Das, Nath, and Sarma
\cite[Theorem~1.2]{DasNathSarma2026}.  Their proof gives a substantially
longer $q$-product expression for the odd part of $\CPhi_{18}(q)$ modulo
$8$. The two other progressions in
\eqref{eq:cor-mod8} appear to be new.

As a second illustration of Theorem~\ref{thm:general} we treat $k=10$
with $m=3$, where \eqref{eq:CWY53} gives only $\cphi_{10}(2n+1)\equiv0\pmod4$.

\begin{theorem}\label{thm:ten}
We have
\begin{equation}
 \sum_{n=0}^{\infty}\cphi_{10}(2n+1)q^n\equiv4\frac{f_8}{f_1}\pmod8.
 \label{eq:ten}
\end{equation}
\end{theorem}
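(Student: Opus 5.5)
Theorem~\ref{thm:ten} follows from Theorem~\ref{thm:general} with $k=10$, $m=3$ (valid since $10\equiv2\pmod 8$). The first congruence in \eqref{eq:general-parts} gives
\[
 \sum_{n\geq0}\cphi_{10}(2n+1)q^n\equiv4\frac{f_4^4}{f_1^4}\CPhi_5(q)\pmod8 .
\]
Because of the factor $4$, it suffices to know $\frac{f_4^4}{f_1^4}\CPhi_5(q)$ modulo $2$.

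Modulo $2$ we have $f_1^4\equiv f_4$ and $f_4^4\equiv f_{16}$. Hence the claim reduces to
\[
 \frac{f_{16}}{f_4}\,\CPhi_5(q)\equiv\frac{f_8}{f_1}\pmod2,
 \qquad\text{i.e.}\qquad
 \CPhi_5(q)\equiv\frac{f_4f_8}{f_1f_{16}}\pmod 2 .
\]
Using $f_8\equiv f_1^8$, $f_{16}\equiv f_1^{16}$ and $f_4\equiv f_1^4$ modulo $2$, the right side is $f_1^{-5}$. So the target is
\[
 \CPhi_5(q)\equiv\frac{1}{f_1^5}\pmod2 .
\]

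To get this, I would use Andrews' constant-term representation \eqref{eq:Andrews-CT}:
\[
 \CPhi_k(q)=\CT_z\prod_{n\ge0}(1+zq^{n+1})^k(1+z^{-1}q^n)^k .
\]
Equivalently, by the Jacobi triple product, it is the constant term of $\bigl(f_2 f_1^{-2}\cdot\theta(z)\bigr)^k$ up to the standard normalization.

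Writing $5=4+1$, modulo $2$ each factor satisfies $(1+X)^4\equiv1+X^4$. This is the Garvan--Sellers idea at $p=2$. The integrand becomes
\[
 \prod(1+z^4q^{4n+4})(1+z^{-4}q^{4n})\cdot\prod(1+zq^{n+1})(1+z^{-1}q^n).
\]
By Jacobi triple product, the first product is $\frac{1}{f_4}\sum_j z^{4j}q^{2j(j+1)}$ and the second is $\frac{1}{f_1}\sum_i z^iq^{i(i+1)/2}$, with the $f_2$-type factors absorbed. Extracting the constant term forces $i=-4j$. This gives
\[
 \CPhi_5(q)\equiv\frac{1}{f_1f_4}\sum_j q^{2j(j+1)+2j(4j-1)}
 =\frac{1}{f_1f_4}\sum_j q^{10j^2}\pmod2 .
\]
Modulo $2$, $\sum_j q^{10j^2}=\varphi(q^{10})\equiv1$. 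Hence $\CPhi_5\equiv 1/(f_1f_4)\equiv f_1^{-5}\pmod 2$, as required.

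Alternatively, one can cite the known mod-$2$ behavior of $\CPhi_5$, or Kolitsch's relation $\CPhi_5\equiv$ (partition-type product) from \cite{ChanWangYang2019}.

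The main obstacle is getting the triple-product normalizations exactly right in the constant-term step, i.e.\ the precise prefactors $f_2/f_1^2$ or $1/f_1$. An error there would change the final product by a power of $f_1$ modulo $2$. I would check this by comparing a few coefficients, e.g.\ $\cphi_5(0)=1$, $\cphi_5(1)=5$, against $f_1^{-5}$ modulo $2$. The other steps are routine manipulations with $f_r^{2}\equiv f_{2r}\pmod 2$.
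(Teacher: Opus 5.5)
Your proof is correct and follows the paper's route. You apply Theorem~\ref{thm:general} with $k=10$, $m=3$, then prove $\CPhi_5(q)\equiv 1/(f_1f_4)\equiv f_1^{-5}\pmod 2$ via Andrews' constant term, where the matching condition $i=-4j$ produces $\varphi(q^{10})\equiv1$. The only difference is cosmetic: you apply $(1+X)^4\equiv1+X^4\pmod2$ directly to get $J(z^4,q^4)J(z,q)$, whereas the paper passes through $J(z^2,q^2)^2$ and Lemma~\ref{lem:theta-square}, picking up a harmless extra factor $\varphi(q^2)$. Your aside about an $f_2f_1^{-2}$ prefactor is garbled (the correct relation is $J(z,q)=\Theta(z,q)/f_1$), but the computation you actually carry out uses the right normalizations.
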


Our proofs are elementary.  Theorem~\ref{thm:general} rests on the
congruence $(1+X)^{2^m}\equiv(1+X^2)^{2^{m-1}}\pmod{2^m}$ applied factor by
factor inside Andrews' constant-term representation, together with an
exact evaluation of the even part in $z$ of the square of the Jacobi theta function.
For $k=18$, the exact formula of Chan, Wang, and Yang for $\CPhi_9$
collapses modulo $4$ to a two-term expression involving the Borwein
cubic theta function $a(q)$ (Lemma~\ref{lem:C9mod4}); combined with the
observation $f_1^3\equiv\psi(q)\pmod4$ this yields Theorem~\ref{thm:odd18},
and the pentagonal theta series then appears through the classical
$3$-dissection of $\psi(q)$.  For $k=10$ the same constant-term argument
gives $\CPhi_5(q)$ modulo $2$.

The rest of the paper is organized as follows: Section~\ref{sec:prelim} collects the $q$-series identities we need,
Section~\ref{sec:reductions} proves Theorem~\ref{thm:general},
Section~\ref{proof} proves Theorems~\ref{thm:odd18} and \ref{thm:strong}
and their corollaries, Section~\ref{sec:ten} proves Theorem~\ref{thm:ten}, and Section~\ref{conc} ends the paper with some ideas for future study.

\section{Preliminaries}\label{sec:prelim}

If $F(q)=\sum_{n\geq0}a(n)q^n$, define the Atkin $U$-operator
\[
 U_3F(q):=\sum_{n\geq0}a(3n)q^n.
\]
We require the classical theta function
\[
 \Theta(z,q):=\sum_{r\in\Z}z^r q^{r(r+1)/2},
\]
and the Borwein cubic theta function
\[
 a(q):=\sum_{m,n\in\Z}q^{m^2+mn+n^2}.
\]
Jacobi's triple-product identity gives
\begin{equation}
 \Theta(z,q)=f_1(-zq;q)_\infty(-z^{-1};q)_\infty,
 \label{eq:JTP-theta}
\end{equation}
and the product representations
\begin{equation}
 \varphi(q)=\frac{f_2^5}{f_1^2f_4^2},\qquad
 \psi(q)=\frac{f_2^2}{f_1}.
 \label{eq:phi-product}
\end{equation}

We will also use the standard identities
\begin{align}
 \frac{1}{f_1^4}
 &=\frac{f_4^{14}}{f_2^{14}f_8^4}
   +4q\frac{f_4^2f_8^4}{f_2^{10}},
 \label{eq:two-dissection}\\
 f_1^3&=a(q^3)f_3-3qf_9^3,
 \label{eq:cubic-dissection}\\
 \psi(q)&=\frac{f_6f_9^2}{f_3f_{18}}+q\frac{f_{18}^2}{f_9}.
 \label{eq:psi-3dissection}
\end{align}
Identity \eqref{eq:two-dissection} is the usual 2-dissection of
$1/f_1^4$; see \cite[Lemma~2]{BaruahDas2022}.
Identity \eqref{eq:cubic-dissection} is equivalent to the
$3$-dissection $b(q)=a(q^3)-c(q^3)$ of the Borwein cubic theta
function $b(q)=f_1^3/f_3$, together with the product representation
$c(q)=3q^{1/3}f_3^3/f_1$; see \cite[(22.1.5)--(22.1.7)]{Hirschhorn2017}. Identity \eqref{eq:psi-3dissection}
is the $3$-dissection of $\psi(q)$; see \cite[(14.3.3)]{Hirschhorn2017}. Finally, since $(-1)^r(2r+1)\equiv1\pmod4$ for every integer $r\geq0$,
Jacobi's identity $f_1^3=\sum_{r\geq0}(-1)^r(2r+1)q^{r(r+1)/2}$ gives
\begin{equation}
 f_1^3\equiv\psi(q)\pmod4.
 \label{eq:f13-psi}
\end{equation}

We also need the following easy to prove lemma.
\begin{lemma}\label{lem:binomial}
For every $m\geq1$ the following formal congruences hold:
\begin{align}
 (1+X)^{2^m}&\equiv(1+X^2)^{2^{m-1}}\pmod{2^m},\label{eq:bin2m}\\
 f_r^{2^m}&\equiv f_{2r}^{2^{m-1}}\pmod{2^m}.\label{eq:fr2m}
\end{align}
In particular, $f_r^4\equiv f_{2r}^2\pmod4$ and
\begin{equation}
 \frac{\varphi(q)f_2}{f_1^2}=\frac{f_2^6}{f_1^4f_4^2}\equiv1\pmod4.
 \label{eq:Eunit}
\end{equation}
\end{lemma}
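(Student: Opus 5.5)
Both congruences \eqref{eq:bin2m} and \eqref{eq:fr2m} follow from one induction on $m$, and \eqref{eq:Eunit} then follows from the $m=2$ case together with the product formula \eqref{eq:phi-product}.

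For \eqref{eq:bin2m}, the case $m=1$ reads $(1+X)^2=1+2X+X^2\equiv1+X^2\pmod 2$. For the inductive step it is cleaner to prove the stronger statement $(1+X)^{2^m}=(1+X^2)^{2^{m-1}}+2^mG_m(X)$ with $G_m\in\Z[X]$. Suppose $A=B+2^mG$ with $A=(1+X)^{2^m}$ and $B=(1+X^2)^{2^{m-1}}$. Squaring gives
\[
A^2=B^2+2^{m+1}BG+2^{2m}G^2 .
\]
Since $2m\geq m+1$ for $m\geq1$, we get $A^2\equiv B^2\pmod{2^{m+1}}$, which is exactly the statement for $m+1$. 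This is the standard fact that $a\equiv b\pmod{2^m}$ implies $a^2\equiv b^2\pmod{2^{m+1}}$.

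For \eqref{eq:fr2m}, the same lifting argument applies directly in $\Z[[q]]$. Write $f_r^2=\prod_i(1-q^{r(i+1)})^2$. Each factor is $\equiv 1-q^{2r(i+1)}$ modulo $2$ coefficientwise, so $f_r^2\equiv f_{2r}\pmod 2$. This congruence is valid for the infinite product because each coefficient involves only finitely many factors. Equivalently, one can apply \eqref{eq:bin2m} with $X=-q^{r(i+1)}$, noting $X^2=q^{2r(i+1)}$, and take the product over $i$. Either way, repeated squaring with the lifting lemma gives $f_r^{2^m}\equiv f_{2r}^{2^{m-1}}\pmod{2^m}$.

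For \eqref{eq:Eunit}, substituting \eqref{eq:phi-product} gives the identity $\varphi(q)f_2/f_1^2=f_2^6/(f_1^4f_4^2)$. By the case $m=2$ of \eqref{eq:fr2m}, we have $f_1^4\equiv f_2^2$ and $f_2^4\equiv f_4^2\pmod 4$. Hence $f_1^4f_4^2\equiv f_2^2\cdot f_2^4=f_2^6\pmod4$.

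The only delicate point is dividing by $f_1^4f_4^2$ modulo $4$. Its constant term is $1$, so it is a unit in $\Z[[q]]$. If $P\equiv Q\pmod 4$ with $P,Q$ units, then $P^{-1}-Q^{-1}=P^{-1}Q^{-1}(Q-P)\equiv0\pmod 4$. Therefore
\[
\frac{f_2^6}{f_1^4f_4^2}\equiv\frac{f_2^6}{f_2^6}=1\pmod4 .
\]
No step here is a real obstacle; the lemma is a routine lifting-the-exponent argument.
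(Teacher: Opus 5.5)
Your proof is correct, but it obtains \eqref{eq:bin2m} differently from the paper.

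\textbf{The paper's route.} It does not induct. It writes $(1+X)^{2^m}=(1+X^2+2X)^{2^{m-1}}$, expands binomially, and uses the $2$-adic valuation $v_2\binom{2^{m-1}}{j}=m-1-v_2(j)$ (a Kummer-type fact). This shows every term with $j\geq1$ has valuation $m-1+j-v_2(j)\geq m$. It then substitutes $X=-q^{ri}$ and multiplies over $i\geq1$ to get \eqref{eq:fr2m}.

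\textbf{Your route.} You start from the trivial congruence modulo $2$ and lift it one power of $2$ at a time, via $A=B+2^mG\Rightarrow A^2\equiv B^2\pmod{2^{m+1}}$. This needs nothing about binomial coefficients beyond the base case $(1+X)^2\equiv 1+X^2\pmod 2$. As you note, the same lifting works verbatim in $\Z[[q]]$ starting from $f_r^2\equiv f_{2r}\pmod2$, so \eqref{eq:fr2m} does not even depend on \eqref{eq:bin2m}.

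\textbf{Trade-off.} The paper's expansion is a one-shot argument that exhibits the exact valuation of each discarded term. Yours is more elementary and more portable, since it works in any commutative ring.

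You also explicitly justify dividing by the unit $f_1^4f_4^2$ modulo $4$, using $P^{-1}-Q^{-1}=P^{-1}Q^{-1}(Q-P)$. The paper leaves this step implicit when it says \eqref{eq:Eunit} follows ``immediately'', so your version is the more careful one there.
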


\begin{proof}
Write
\[
 (1+X)^{2^m}=\bigl(1+X^2+2X\bigr)^{2^{m-1}}
 =\sum_{j=0}^{2^{m-1}}\binom{2^{m-1}}{j}(2X)^j(1+X^2)^{2^{m-1}-j}.
\]
We have $v_2\binom{2^{m-1}}{j}=m-1-v_2(j)$ for $1\leq j\leq2^{m-1}$,
so $v_2\bigl(\binom{2^{m-1}}{j}2^j\bigr)=m-1+j-v_2(j)\geq m$ for $j\geq1$.
Only the $j=0$ term survives modulo $2^m$, which gives \eqref{eq:bin2m}.

Replacing $X$ by $-q^{rm}$ in \eqref{eq:bin2m} and multiplying over $m\geq1$
gives \eqref{eq:fr2m}. 

Finally, we have $\varphi(q)=f_2^5/(f_1^2f_4^2)$  and $f_1^4f_4^2\equiv f_2^2\cdot f_2^4=f_2^6\pmod4$
by \eqref{eq:fr2m}, immediately giving us \eqref{eq:Eunit}.
\end{proof}

\section{Proof of Theorem \ref{thm:general}}\label{sec:reductions}

Let
\begin{equation}
 \mathcal R_k(z,q)
 =\prod_{j=0}^{\infty}(1+zq^j)^k.
 \label{eq:row-generating-function}
\end{equation}
Then, it is known (see \cite[Eq. (5.14)]{Andrews1984} or \cite[Theorem 2.1]{GarvanSellers2014}) that
\begin{equation}
 \CPhi_k(q)
 =\CT_z\!\left(
   \mathcal R_k(zq,q)\mathcal R_k(z^{-1},q)
  \right),
 \label{eq:general-principle-specialized}
\end{equation}
where $\CT_z(f(z))$ denotes the constant term (that is, the coefficient of $z^0$) in the Laurent series $f(z)$. Using \eqref{eq:row-generating-function}, the expression inside this
constant term is
\begin{align*}
 \mathcal R_k(zq,q)\mathcal R_k(z^{-1},q)
 &=\prod_{j=0}^{\infty}(1+zq^{j+1})^k
   \prod_{j=0}^{\infty}(1+z^{-1}q^j)^k\\
 &=\left(
   \prod_{m=1}^{\infty}(1+zq^m)(1+z^{-1}q^{m-1})
   \right)^k.
\end{align*}
Consequently,
\begin{equation}
 \CPhi_k(q)=\CT_z J(z,q)^k,
 \qquad
 J(z,q):=\prod_{m=1}^{\infty}
 (1+zq^m)(1+z^{-1}q^{m-1}).
 \label{eq:Andrews-CT}
\end{equation}
By \eqref{eq:JTP-theta}, we also have
\begin{equation}
 J(z,q)=\frac{\Theta(z,q)}{f_1}.
 \label{eq:J-theta}
\end{equation}

\begin{lemma}\label{lem:theta-square}
We have
\begin{equation}
 \Theta(z,q)^2=\varphi(q)\,\Theta(z^2,q^2)+2\psi(q^2)\sum_{r\in\Z}z^{2r+1}q^{(r+1)^2}.
 \label{eq:theta-square}
\end{equation}
In particular, the even-power part in $z$ of $\Theta(z,q)^2$ is
$\varphi(q)\Theta(z^2,q^2)$, and its odd-power part is divisible by $2$.
\end{lemma}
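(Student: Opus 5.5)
We expand $\Theta(z,q)^2$ directly as a double sum and reorganize by change of variables:
\[
 \Theta(z,q)^2=\sum_{r,s\in\Z}z^{r+s}q^{\frac{r(r+1)}2+\frac{s(s+1)}2}.
\]
We split according to the parity of $r+s$.

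\textbf{Even part.} If $r+s=2n$, write $r=n+t$ and $s=n-t$ with $t\in\Z$. Then
\[
 \tfrac{r(r+1)}2+\tfrac{s(s+1)}2=\tfrac{r^2+s^2+r+s}{2}=n^2+t^2+n=n(n+1)+t^2 .
\]
The even part is therefore
\[
 \sum_{n}z^{2n}q^{n(n+1)}\sum_t q^{t^2}=\Theta(z^2,q^2)\,\varphi(q),
\]
because $\Theta(z^2,q^2)=\sum_n z^{2n}q^{2\cdot n(n+1)/2}$. This identification of the exponents is the only step needing a check.

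\textbf{Odd part.} If $r+s=2n+1$, write $r=n+1+t$ and $s=n-t$. Then $r^2+s^2=(n+1)^2+n^2+2t^2+2t$ and $r+s=2n+1$, so the exponent is
\[
 \tfrac{(n+1)^2+n^2+2n+1}{2}+t^2+t=(n+1)^2+t(t+1).
\]
The map $(n,t)\mapsto(r,s)$ is a bijection onto the pairs with $r+s$ odd. The odd part is thus
\[
 \sum_n z^{2n+1}q^{(n+1)^2}\sum_{t\in\Z}q^{t(t+1)}.
\]
The inner sum over all integers $t$ counts each value $t(t+1)$ twice, via $t$ and $-1-t$. Hence
\[
 \sum_{t\in\Z}q^{t(t+1)}=2\sum_{t\ge0}q^{t(t+1)}=2\psi(q^2),
\]
which gives the stated second term, visibly divisible by $2$.

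The "in particular" statements follow immediately. The first term contains only even powers of $z$ and the second only odd powers, and the second carries the explicit factor $2$ with integer coefficients.

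There is no real obstacle. The only care needed is confirming that both parametrizations are bijective onto the respective parity classes (the Jacobians have determinant $\pm2$ with the parity constraint) and that no convergence issues arise, which is fine because everything is a formal Laurent series in $z$ with $q$-adically convergent coefficients.
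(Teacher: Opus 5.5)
Your proposal is correct and is essentially the paper's proof: it expands $\Theta(z,q)^2$ as a double sum, parametrizes the pairs with even sum by $(n+t,n-t)$ and those with odd sum by $(n+1+t,n-t)$, and obtains the exponents $n(n+1)+t^2$ and $(n+1)^2+t(t+1)$. It then uses $\sum_{t\in\Z}q^{t(t+1)}=2\psi(q^2)$, and your exponent computations and bijectivity remarks all check out.
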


\begin{proof}
From the definition of $\Theta(z,q)$,
\[
 \Theta(z,q)^2
 =
 \sum_{a,b\in\Z}
 z^{a+b}
 q^{a(a+1)/2+b(b+1)/2}.
\]
Fix $r\in\Z$.  Every solution of $a+b=2r$ can be written uniquely as
$a=r+s$, $b=r-s$ with $s\in\Z$, and then
$a(a+1)/2+b(b+1)/2=r(r+1)+s^2$.  Hence
\[
 [z^{2r}]\Theta(z,q)^2=q^{r(r+1)}\sum_{s\in\Z}q^{s^2}=q^{r(r+1)}\varphi(q),
\]
and summing over $r$ gives the first term in \eqref{eq:theta-square}.

Similarly, every solution of $a+b=2r+1$ can be written uniquely as
$a=r+1+s$, $b=r-s$, and then $a(a+1)/2+b(b+1)/2=(r+1)^2+s(s+1)$.  Hence
\[
 [z^{2r+1}]\Theta(z,q)^2=q^{(r+1)^2}\sum_{s\in\Z}q^{s(s+1)}=2\psi(q^2)q^{(r+1)^2},
\]
which gives the second term.
\end{proof}

We can now prove Theorem \ref{thm:general}.
\begin{proof}[Proof of Theorem~\ref{thm:general}]
Write $k=2+2^mt$ with $t\geq0$ and put $N=2^{m-1}t=(k-2)/2$, so that $k/2=N+1$. Applying \eqref{eq:bin2m} separately to every factor
$(1+zq^j)^{2^mt}$ and $(1+z^{-1}q^{j-1})^{2^mt}$ in \eqref{eq:Andrews-CT} gives
\begin{equation}
     J(z,q)^{k}=J(z,q)^2J(z,q)^{2^mt}
 \equiv J(z,q)^2J(z^2,q^2)^{N}\pmod{2^m}.\label{n1}
\end{equation}
When $t=0$, both sides of this congruence are simply $J(z,q)^2$, so
the argument below also applies with $N=0$.
By \eqref{eq:J-theta},
\begin{equation}
    J(z,q)^2J(z^2,q^2)^{N}=\frac{\Theta(z,q)^2\,\Theta(z^2,q^2)^{N}}{f_1^2f_2^{N}}.\label{n2}
\end{equation}
Since $\Theta(z^2,q^2)^{N}$ contains only even powers of $z$, an odd
power of $z$ from $\Theta(z,q)^2$ cannot combine with it to produce $z^0$.
Therefore, by Lemma~\ref{lem:theta-square}, only the even-power part of
$\Theta(z,q)^2$ contributes to the constant term:
\begin{equation}
     \CT_z\bigl(\Theta(z,q)^2\Theta(z^2,q^2)^{N}\bigr)
 =\varphi(q)\,\CT_z\bigl(\Theta(z^2,q^2)^{N+1}\bigr)
 =\varphi(q)\,\CT_w\bigl(\Theta(w,q^2)^{N+1}\bigr),\label{n3}
\end{equation}
where $w=z^2$.  

By \eqref{eq:Andrews-CT} and \eqref{eq:J-theta} with $q$
replaced by $q^2$ and $k$ replaced by $N+1$,
\begin{equation}
\CT_w\bigl(\Theta(w,q^2)^{N+1}\bigr)=f_2^{N+1}\CPhi_{N+1}(q^2).\label{n4}
\end{equation}
Putting everything together, using \eqref{n1}--\eqref{n4}, we have
\[
 \CPhi_k(q)=\CT_zJ(z,q)^k\equiv\frac{\varphi(q)f_2^{N+1}}{f_1^2f_2^{N}}\CPhi_{N+1}(q^2)
 =\frac{\varphi(q)f_2}{f_1^2}\CPhi_{k/2}(q^2)\pmod{2^m},
\]
which is \eqref{eq:general}.

For \eqref{eq:general-parts}, we use \eqref{eq:phi-product} and \eqref{eq:two-dissection} in \eqref{eq:general}:
\begin{equation}
 \frac{\varphi(q)f_2}{f_1^2}
 =\frac{f_2^6}{f_1^4f_4^2}
 =\frac{f_4^{12}}{f_2^8f_8^4}
   +4q\frac{f_8^4}{f_2^4}.
 \label{eq:prefactor}
\end{equation}
The first summand contains only even powers of $q$, the second only odd
powers, and $\CPhi_{k/2}(q^2)$ contains only even powers.  Separating
\eqref{eq:general} into even and odd powers of $q$ and replacing $q^2$
by $q$ gives \eqref{eq:general-parts}.
\end{proof}

\begin{remark}\label{rem:general}
(i) For $m=2$, \eqref{eq:Eunit} shows that \eqref{eq:general} reduces to
$\CPhi_k(q)\equiv\CPhi_{k/2}(q^2)\pmod4$, that is, to \eqref{eq:CWY53} with
$p=2$, $\alpha=1$, $N=k/2$.  Thus Theorem~\ref{thm:general} is consistent
with, and for $m\geq3$ sharpens, the congruence of Chan, Wang, and Yang.

(ii) The argument does not extend to $k\equiv j\pmod{2^m}$ with
$j\neq0,2$: for $j=1$ the even part in $z$ of $\Theta(z,q)$ is
$\Theta(z^2q^{-1},q^4)$, which is not a multiple of $\Theta(z^2,q^2)$, and
for $j\geq3$ the odd part of $\Theta(z,q)^2$ in Lemma~\ref{lem:theta-square}
begins to contribute.  For $j=0$ one recovers \eqref{eq:CWY53} only modulo
$2^{m}$, which is weaker than \eqref{eq:CWY53} itself.
\end{remark}

\section{Proofs of Theorems \ref{thm:odd18} and \ref{thm:strong} and their corollaries}\label{proof}

We need an exact identity due to Chan, Wang, and Yang 
\cite[Theorem~5.1]{ChanWangYang2019}:
\begin{align}
 \CPhi_9(q)
={}&324q\frac{f_3^8}{f_1^9}
 +19683q^4\frac{f_9^{12}}{f_1^9f_3^4}
 -240q\frac{f_9^3}{f_3^4}-1458q^2\frac{f_9^6}{f_1^3f_3^4}
 +\frac{f_1^3}{f_3^4}.
 \label{eq:C9-exact}
\end{align}

\begin{lemma}\label{lem:C9mod4}
We have
\begin{equation}
 f_1^9\CPhi_9(q)
 \equiv a(q^3)^4+2q^4\frac{f_9^{12}}{f_3^4} \pmod 4.
 \label{eq:C9mod4}
\end{equation}
\end{lemma}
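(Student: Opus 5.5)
The plan is to multiply the exact identity \eqref{eq:C9-exact} of Chan, Wang, and Yang by $f_1^9$, reduce its coefficients modulo $4$, and then use the cubic dissection \eqref{eq:cubic-dissection} to rewrite every power of $f_1^3$ in terms of $a(q^3)$. Division by $f_3^4$ is harmless throughout, since $f_3$ is an invertible power series with integer coefficients.

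\textbf{Step 1: reduce the coefficients.} Multiplying \eqref{eq:C9-exact} by $f_1^9$ gives integral power series. Modulo $4$ we have $324\equiv0$ and $-240\equiv0$. We also have $19683\equiv-1$ and $-1458\equiv2$. Hence
\[
 f_1^9\CPhi_9(q)\equiv\frac{f_1^{12}}{f_3^4}+2q^2\frac{f_1^6f_9^6}{f_3^4}-q^4\frac{f_9^{12}}{f_3^4}\pmod4 .
\]

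\textbf{Step 2: expand $f_1^{12}$ modulo $4$.} Write $f_1^3=A+C$ with $A=a(q^3)f_3$ and $C=-3qf_9^3$, as in \eqref{eq:cubic-dissection}. Then
\[
(A+C)^4\equiv A^4+2A^2C^2+C^4\pmod4,
\]
because the binomial coefficients $4$ vanish and $6\equiv2$. Next, $C^2=9q^2f_9^6$, so $2A^2C^2\equiv2A^2q^2f_9^6$, and $C^4=81q^4f_9^{12}\equiv q^4f_9^{12}$. Dividing by $f_3^4$ gives
\[
\frac{f_1^{12}}{f_3^4}\equiv a(q^3)^4+2q^2a(q^3)^2\frac{f_9^6}{f_3^2}+q^4\frac{f_9^{12}}{f_3^4}\pmod4 .
\]

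\textbf{Step 3: the middle term only needs $f_1^6$ modulo $2$.} Since that term carries a factor $2$, it suffices to know $f_1^6=(A+C)^2\equiv A^2+C^2\equiv a(q^3)^2f_3^2+q^2f_9^6\pmod2$. Therefore
\[
2q^2\frac{f_1^6f_9^6}{f_3^4}\equiv2q^2a(q^3)^2\frac{f_9^6}{f_3^2}+2q^4\frac{f_9^{12}}{f_3^4}\pmod4 .
\]

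\textbf{Step 4: combine.} Add the three contributions. The two terms $2q^2a(q^3)^2f_9^6/f_3^2$ sum to $4(\cdots)\equiv0$. The coefficients of $q^4f_9^{12}/f_3^4$ are $1$, $2$ and $-1$, which total $2$. What remains is $a(q^3)^4+2q^4f_9^{12}/f_3^4$, which is \eqref{eq:C9mod4}.

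There is no real obstacle here. The only point needing care is Step 3: the mixed term must be reduced using $f_1^6$ modulo $2$, with the factor $2$ carried along. It then cancels exactly against the cross term $2A^2C^2$ from Step 2, and its leftover $2q^4f_9^{12}/f_3^4$ produces the final coefficient $2$.
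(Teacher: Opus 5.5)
Your proposal is correct and is essentially the paper's proof: multiply by $f_1^9$, reduce the coefficients modulo $4$, substitute $f_1^3=a(q^3)f_3-3qf_9^3$, and expand. The paper does Steps 2--4 in one line as the polynomial congruence $(Z-3Y)^4+2Y^2(Z-3Y)^2-Y^4\equiv Z^4+2Y^4\pmod 4$ with $Y=qf_9^3$ and $Z=a(q^3)f_3$, and your term-by-term expansion is just that computation written out.
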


\begin{proof}
Multiplying \eqref{eq:C9-exact} by $f_1^9$ and reducing the
coefficients modulo $4$ gives
\begin{equation}
 f_1^9\CPhi_9(q)
 \equiv
 \frac{f_1^{12}+2q^2f_1^6f_9^6-q^4f_9^{12}}{f_3^4}
 \pmod4.
 \label{eq:C9-intermediate}
\end{equation}
Put $X=f_1^3$, $Y=qf_9^3$, and $Z=a(q^3)f_3$, so that
\eqref{eq:cubic-dissection} reads $X=Z-3Y$.  The numerator in
\eqref{eq:C9-intermediate} is $X^4+2X^2Y^2-Y^4$, and
\begin{align*}
 (Z-3Y)^4+2Y^2(Z-3Y)^2-Y^4
& \equiv Z^4+2Y^4\pmod4.
\end{align*}
Substituting back and dividing by $f_3^4$ proves \eqref{eq:C9mod4}.
\end{proof}

\begin{remark}\label{rem:CGT}
Lemma~\ref{lem:C9mod4} can also be derived from the expression
\[
 \CPhi_9(q)=\frac{1}{f_1^9}\Bigl(a(q)^3a(q^3)+54q\,a(q)\frac{f_3^9}{f_1^3}+162q^2\frac{f_3^8f_9^3}{f_1^3}\Bigr)
\]
of Cui, Gu, and Tang \cite[Theorem~3.5]{CuiGuTang2024}, using
$a(q)=a(q^3)+6qf_9^3/f_3$ \cite[(4.3)]{CuiGuTang2024} and the
$3$-dissection of $1/f_1^3$ \cite[(4.5)]{CuiGuTang2024}.
\end{remark}

Define
\begin{equation}
 H(q):=\frac{f_4^4}{f_1^4}\CPhi_9(q),
 \label{eq:H}
\end{equation}
so that the case $k=18$, $m=4$ of \eqref{eq:general-parts} reads
\begin{equation}
 \sum_{n\ge0}\cphi_{18}(2n+1)q^n\equiv4H(q)\pmod{16}.
 \label{eq:oddpart}
\end{equation}
By Lemma~\ref{lem:binomial}, $f_1^{16}\equiv f_2^8\equiv f_4^4\pmod4$,
and hence $f_4^4/f_1^{13}\equiv f_1^3\pmod4$.  Combining this with
Lemma~\ref{lem:C9mod4} and \eqref{eq:f13-psi}, we obtain
\begin{equation}
 H(q)=\frac{f_4^4}{f_1^{13}}\cdot f_1^9\CPhi_9(q)
 \equiv f_1^3\left(a(q^3)^4+2q^4\frac{f_9^{12}}{f_3^4}\right)
 \equiv \psi(q)\,a(q^3)^4+2q^4\psi(q)\frac{f_9^{12}}{f_3^4}
 \pmod4.
 \label{eq:H-via-psi}
\end{equation}

\begin{proof}[Proof of Theorem \ref{thm:odd18}]
Every nonconstant coefficient of $a(q)$ is even.  Indeed,
the involution
\[
 (m,n)\longmapsto(-m,-n)
\]
preserves the quadratic form $m^2+mn+n^2$.  Its only fixed point in
$\Z^2$ is $(0,0)$, which contributes the constant term of $a(q)$.
Thus, we can write
\[
 a(q)
 =
 1+
 2\sum_{m=1}^{\infty}\sum_{n\in\Z}
 q^{m^2+mn+n^2}
 +
 2\sum_{n=1}^{\infty}q^{n^2}.
\]
Consequently, $a(q)\equiv1\pmod2$, hence $a(q)^2\equiv1\pmod4$ and
$a(q)^4\equiv1\pmod8$.  In particular $a(q^3)^4\equiv1\pmod4$, and
\eqref{eq:H-via-psi} becomes
\begin{equation}
 H(q)\equiv\psi(q)+2q^4\psi(q)\frac{f_9^{12}}{f_3^4}\pmod4.
 \label{eq:H-final}
\end{equation}
Writing $H(q)=\psi(q)+2q^4\psi(q)f_9^{12}/f_3^4+4\mathcal B(q)$ with
$\mathcal B(q)\in\Z[[q]]$ and substituting into \eqref{eq:oddpart} gives
\[
 \sum_{n\ge0}\cphi_{18}(2n+1)q^n\equiv4\psi(q)+8q^4\psi(q)\frac{f_9^{12}}{f_3^4}+16\mathcal B(q)
 \equiv4\psi(q)+8q^4\psi(q)\frac{f_9^{12}}{f_3^4}\pmod{16},
\]
which is \eqref{eq:odd18}.
\end{proof}

\begin{proof}[Proof of Theorem \ref{thm:strong}]
Define
\[
 R(q):=\frac{f_2f_3^2}{f_1f_6},
\]
so that \eqref{eq:psi-3dissection} reads
\begin{equation}
 \psi(q)=R(q^3)+q\,\psi(q^9).
 \label{eq:psi-RS}
\end{equation}
In particular, $\psi(q)$ has no terms whose exponents are congruent to
$2$ modulo $3$, and $U_3\psi(q)=R(q)$ by \eqref{eq:psi-3dissection}.

We apply $U_3$ to \eqref{eq:odd18}.  Put
\[
 F(q):=\sum_{n=0}^{\infty}\cphi_{18}(2n+1)q^n,
 \qquad\text{so that}\qquad
 U_3F(q)=\sum_{n=0}^{\infty}\cphi_{18}(6n+1)q^n.
\]
Equation \eqref{eq:odd18} says $F(q)=4\psi(q)+8q^4\psi(q)f_9^{12}/f_3^4+16\mathcal A(q)$
for some $\mathcal A(q)\in\Z[[q]]$, and $U_3$ is $\Z$-linear, so
\begin{equation}
 U_3F(q)\equiv4U_3\psi(q)+8U_3\Bigl(q^4\psi(q)\frac{f_9^{12}}{f_3^4}\Bigr)\pmod{16}.
 \label{eq:U3F}
\end{equation}
The quotient $f_9^{12}/f_3^4$ is a series in $q^3$, so a term of $q^4\psi(q)f_9^{12}/f_3^4$ has
exponent divisible by $3$ only if it comes from a term of $\psi(q)$ with
exponent congruent to $2$ modulo $3$.  There are no such terms by
\eqref{eq:psi-RS}, so the second summand in \eqref{eq:U3F} vanishes and
\begin{equation}
 U_3F(q)=\sum_{n=0}^{\infty}\cphi_{18}(6n+1)q^n\equiv4R(q)\pmod{16}.
 \label{eq:C18-6n1-product}
\end{equation}

Finally, Jacobi's triple-product identity gives
\begin{align}
 R(q)=\frac{f_2f_3^2}{f_1f_6}
 &=f_3(-q;q^3)_\infty(-q^2;q^3)_\infty=\sum_{r\in\Z}q^{r(3r-1)/2},
 \label{eq:pentagonal-theta}
\end{align}
where, for the first equality, we use
$f_2/f_1=(-q;q)_\infty$ and $f_6/f_3=(-q^3;q^3)_\infty$, and the
second equality is Jacobi's triple product with base $q^3$.
Substituting \eqref{eq:pentagonal-theta} into \eqref{eq:C18-6n1-product}
proves \eqref{eq:strong}.
\end{proof}

\begin{proof}[Proof of Corollary~\ref{cor:sharp}]
The map $r\mapsto r(3r-1)/2$ is injective on $\Z$, so the coefficient
of $q^n$ in the sum on the right side of \eqref{eq:strong} is $1$ if
$n=r(3r-1)/2$ for some $r\in\Z$ and $0$ otherwise.  Now
$n=r(3r-1)/2$ for some $r\in\Z$ if and only if $24n+1=(6r-1)^2$, and
every odd square $m^2$ with $m\equiv\pm1\pmod6$ arises in this way;
since $24n+1\equiv1\pmod{24}$ forces any square root to be
$\equiv\pm1\pmod6$, the condition is simply that $24n+1$ is a perfect
square.  This gives us the first part of the result.

If $24n+1$ is a quadratic non-residue modulo a prime $p$, it is not a
square, so $\cphi_{18}(6n+1)\equiv0\pmod{16}$.

For $p=5$ the
non-residues are $2,3\pmod5$, and $24n+1\equiv2,3\pmod 5$ exactly when
$n\equiv3,4\pmod5$, giving \eqref{eq:cor-p5} since
$6(5n+3)+1=30n+19$ and $6(5n+4)+1=30n+25$.

For $p=7$ the
non-residues are $3,5,6\pmod7$, and $24n+1\equiv3,5,6\pmod7$ exactly
when $n\equiv3,4,6\pmod7$, giving \eqref{eq:cor-p7} since
$6(7n+3)+1=42n+19$, $6(7n+4)+1=42n+25$, and $6(7n+6)+1=42n+37$.
\end{proof}

\begin{proof}[Proof of Corollary~\ref{cor:mod8}]
Reducing \eqref{eq:odd18} modulo $8$ gives
\[\sum_{n\ge0}\cphi_{18}(2n+1)q^n\equiv4\psi(q)\pmod8.\]  The coefficient
of $q^n$ in $\psi(q)$ is $1$ if $n=r(r+1)/2$ for some $r\geq0$, that is, if
$8n+1=(2r+1)^2$ is a perfect square, and $0$ otherwise.  This gives us the first part of the result.

If $8n+1$ is a quadratic non-residue modulo some prime, it is
not a square.  Modulo $3$, $8n+1\equiv2$ exactly when $n\equiv2\pmod3$,
and $2(3n+2)+1=6n+5$.  Modulo $5$, the non-residues are $2,3$, and
$8n+1\equiv2,3\pmod5$ exactly when $n\equiv2,4\pmod5$; since
$2(5n+2)+1=10n+5$ and $2(5n+4)+1=10n+9$, this proves
\eqref{eq:cor-mod8}.
\end{proof}

\section{Proof of Theorem \ref{thm:ten}}\label{sec:ten}

Theorem~\ref{thm:general} with $k=10$, $m=3$ gives
\begin{equation}
 \sum_{n\ge0}\cphi_{10}(2n+1)q^n\equiv4\frac{f_4^4}{f_1^4}\CPhi_5(q)\pmod8,
 \label{eq:ten-odd}
\end{equation}
so it suffices to know $\CPhi_5(q)$ modulo $2$ to prove Theorem \ref{thm:ten}.

\begin{lemma}\label{lem:C5mod2}
We have
\begin{equation}
 \CPhi_5(q)\equiv\frac{1}{f_1f_4}\pmod2.
 \label{eq:C5mod2}
\end{equation}
\end{lemma}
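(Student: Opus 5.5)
We compute $\CPhi_5(q)=\CT_z J(z,q)^5$ modulo $2$ directly from \eqref{eq:Andrews-CT}.

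\textbf{Step 1: reduce the power of $J$.} Write $J^5=J\cdot J^4$. By \eqref{eq:bin2m} with $m=2$, read modulo $2$ (or simply by $(1+X)^4\equiv1+X^4$), applied to each factor of $J$, we get
\[
 J(z,q)^5\equiv J(z,q)\,J(z^4,q^4)\pmod2 .
\]
Using \eqref{eq:J-theta}, the right side equals $\Theta(z,q)\Theta(z^4,q^4)/(f_1f_4)$. Hence it suffices to show
\[
 \CT_z\bigl(\Theta(z,q)\Theta(z^4,q^4)\bigr)\equiv1\pmod 2 .
\]

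\textbf{Step 2: extract the constant term.} The constant term of the product is
\[
 \sum_{s\in\Z}[z^{-4s}]\Theta(z,q)\cdot q^{4\cdot s(s+1)/2}
 =\sum_{s\in\Z}q^{(-4s)(-4s+1)/2+2s(s+1)}
 =\sum_{s\in\Z}q^{10s^2}.
\]
Indeed, $8s^2-2s+2s^2+2s=10s^2$.

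\textbf{Step 3: reduce modulo $2$.} This sum is $\varphi(q^{10})$. Since the terms $s$ and $-s$ pair up for $s\ne0$, every nonconstant coefficient of $\varphi$ is even, so $\varphi(q^{10})\equiv1\pmod2$. This gives $\CPhi_5(q)\equiv1/(f_1f_4)\pmod2$.

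The only delicate point is the reduction in Step 1, where the congruence must be valid coefficientwise in both $z$ and $q$. It is, because it is applied factor by factor to polynomials in $zq^m$ and $z^{-1}q^{m-1}$, and the infinite product converges $q$-adically. The rest is a one-line exponent computation.
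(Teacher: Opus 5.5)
Your proof is correct and follows the same route as the paper: the constant-term representation, a binomial reduction of $J^4$ modulo $2$, and extraction of the constant term as $\varphi(q^{10})\equiv1\pmod 2$. The only difference is that you reduce $J(z,q)^4\equiv J(z^4,q^4)$ in one step via $(1+X)^4\equiv1+X^4\pmod 2$. The paper stops at $J(z^2,q^2)^2$, so it then needs Lemma~\ref{lem:theta-square} (which brings in an extra factor $\varphi(q^2)$) and $f_2^2\equiv f_4\pmod 2$; your version is therefore slightly shorter.
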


\begin{proof}
By \eqref{eq:bin2m} with $m=2$, we have
$J(z,q)^4\equiv J(z^2,q^2)^2\pmod2$, so by \eqref{eq:Andrews-CT} and
\eqref{eq:J-theta}
\begin{align*}
\CPhi_5(q)=\CT_zJ(z,q)^5&\equiv\CT_z\bigl(J(z,q)J(z^2,q^2)^2\bigr)\\
 &=\frac{1}{f_1f_2^2}\CT_z\bigl(\Theta(z,q)\Theta(z^2,q^2)^2\bigr)\pmod2.
\end{align*}
By Lemma~\ref{lem:theta-square} with $q$ replaced by $q^2$ and $z$ by $z^2$,
\[
 \Theta(z^2,q^2)^2\equiv\varphi(q^2)\,\Theta(z^4,q^4)
 =\varphi(q^2)\sum_{r\in\Z}z^{4r}q^{2r(r+1)}\pmod2,
\]
which contains only powers $z^{4r}$.  Splitting $\Theta(z,q)$ according
to the parity of the exponent of $z$, we have
\[
 \Theta(z,q)=\sum_{s\in\Z}z^{2s}q^{s(2s+1)}+\sum_{s\in\Z}z^{2s+1}q^{(s+1)(2s+1)}.
\]
Only the first sum can contribute to the constant term of
$\Theta(z,q)\Theta(z^2,q^2)^2$, and it does so precisely when
$2s+4r=0$, that is, when $s=-2r$.  Hence
\begin{align*}
 \CT_z\bigl(\Theta(z,q)\Theta(z^2,q^2)^2\bigr)
 &\equiv\varphi(q^2)\sum_{r\in\Z}q^{(-2r)(-4r+1)}\,q^{2r(r+1)}\\
 &=\varphi(q^2)\sum_{r\in\Z}q^{10r^2}=\varphi(q^2)\varphi(q^{10})\pmod2.
\end{align*}
Thus, we have
\[
\CPhi_5(q)\equiv\frac{1}{f_1f_2^2}\varphi(q^2)\varphi(q^{10})\pmod2.
\]
Since $\varphi(q)=1+2\sum_{n\ge1}q^{n^2}\equiv1\pmod2$, we get
$\CPhi_5(q)\equiv1/(f_1f_2^2)\pmod2$, and $f_2^2\equiv f_4\pmod2$ gives
\eqref{eq:C5mod2}.
\end{proof}

\begin{proof}[Proof of Theorem~\ref{thm:ten}]
Substituting Lemma~\ref{lem:C5mod2} into \eqref{eq:ten-odd} (the factor
$4$ means only the residue modulo $2$ of the remaining series matters),
\[
 \sum_{n\ge0}\cphi_{10}(2n+1)q^n\equiv4\frac{f_4^4}{f_1^5f_4}=4\frac{f_4^3}{f_1^5}\pmod8.
\]
Modulo $2$, $f_1^4\equiv f_4$ and $f_4^2\equiv f_8$, so
$f_4^3/f_1^5=f_4^3/(f_1^4f_1)\equiv f_4^2/f_1\equiv f_8/f_1\pmod2$, which
gives \eqref{eq:ten}.
\end{proof}

\section{Concluding Remarks}\label{conc}

\begin{enumerate}
    \item Theorem~\ref{thm:odd18} determines $\cphi_{18}(2n+1)$ modulo $16$
completely, since the second term on the right of \eqref{eq:odd18} is
$8$ times a series whose coefficients only matter modulo $2$; using
$f_r^2\equiv f_{2r}\pmod2$ that series may be written as
$q^4\psi(q)f_{36}^3/f_{12}$.  For the even part, \eqref{eq:general-parts}
gives $\sum_{n\ge0}\cphi_{18}(2n)q^n\equiv\frac{f_2^{12}}{f_1^8f_4^4}\CPhi_9(q)\pmod{16}$,
so a complete description of $\cphi_{18}(n)$ modulo $16$ would follow from
one of $\CPhi_9(q)$ modulo $16$; we do not pursue this here.
\item The series $f_8/f_1$ is the generating function for partitions with no
part divisible by $8$, so Theorem~\ref{thm:ten} says that $\cphi_{10}(2n+1)\equiv4\pmod8$
exactly when the number of such partitions of $n$ is odd.  In the same
way, $\cphi_{26}(2n+1)$ modulo $8$ and $\cphi_{34}(2n+1)$ modulo $16$ can be
reduced to $\CPhi_{13}$ modulo $2$ and $\CPhi_{17}$ modulo $4$, for which
\cite[Theorems~3.6 and 3.7]{ChanWangYang2019} provide exact formulas; we
leave this to the interested reader.
\end{enumerate}

\section*{Acknowledgements}

The author used OpenAI's ChatGPT 5.6 Sol for the preparation of the initial version of this manuscript, and then used Anthropic's Claude Fable 5.1 for reviewing, which then suggested Corollary \ref{cor:sharp}. The author had a weaker version of Theorem \ref{thm:general} in that version, when we asked Fable 5.1 to search for a more general version of the result, based on computations, it gave the generalization in Theorem \ref{thm:general}. Language changes and some references were also suggested by Fable 5.1. The author has verified all of the mathematical content in this manuscript and takes full responsibility for it.

\end{document}